\documentclass{amsart}




\usepackage{amsmath}
\usepackage{amssymb}
\usepackage{amsthm}
\usepackage{amsfonts}
\usepackage{calligra}
\usepackage{appendix}
\usepackage{graphicx}
\usepackage{enumerate}
\usepackage{mathrsfs}
\usepackage[english]{babel}
\usepackage{mathtools}
\usepackage{hyperref}


\def\R{\mathbb R}

\def\N{\mathbb N}

\def\T{\mathbb T}
\def\A{\mathbf{A}}

\def\E{\mathbf{E}}
\def\u{\mathbf{u}}

\def\B{\mathbf{B}}
\def\x{\mathbf{x}}
\def\d{\,\textup{d}}

\def\tp{\textup}

\newcommand{\abs}[1]{\left\lvert#1\right\rvert}

\newcommand{\defeq}{\mathrel{:\mkern-0.25mu=}}
\newcommand{\eqdef}{\mathrel{=\mkern-0.25mu:}}

\newtheorem{theorem}{Theorem}
\newtheorem{lemma}[theorem]{Lemma}
\newtheorem{proposition}[theorem]{Proposition}

\theoremstyle{definition}

\theoremstyle{remark}

\title{Relaxation of the kinematic dynamo equations}

\begin{document}

\author{Lauri Hitruhin}
\address{Department of Mathematics and Statistics \\ University of Helsinki, P.O. Box 68, 00014 Helsingin yliopisto, Finland}
\email{lauri.hitruhin@helsinki.fi}
\thanks{L.H. was supported by the Academy of Finland projects \#332671 and SA-1346562. S.L. was supported by the ERC Advanced Grant 834728.}

\author{Sauli Lindberg}
\address{Department of Mathematics and Statistics \\ University of Helsinki, P.O. Box 68, 00014 Helsingin yliopisto, Finland}
\email{sauli.lindberg@helsinki.fi}

\date{}

\begin{abstract}
We compute the exact relaxation and $\Lambda$-convex hull of the kinematic dynamo equations and show that they coincide. We also find the relaxation in the stationary case.
\end{abstract}

\maketitle

\section{Introduction}
\emph{Dynamo theory} studies the mechanisms by which electrically conducting fluids generate and sustain the magnetic fields of celestial bodies \cite{AK, CG}. In typical astrophysical applications, the magnetic Reynolds number is very large, and so one often studies the idealised limit of a perfect conductor ~\cite[p. 9]{CG}. We also assume perfect conductivity in this paper. Dynamo theory is divided into two parts.

In the simpler \emph{kinematic dynamo model}, the interaction of the magnetic field and the fluid is described via the induction equation and Gauss's law for magnetism,
\begin{equation} \label{e: Kinematic dynamo equations}
\partial_t \B + \nabla \times (\B \times \u) = 0, \qquad \nabla \cdot \B = 0.
\end{equation}
In \eqref{e: Kinematic dynamo equations}, $\B$ is the magnetic field and $\u$ is the fluid velocity. Sometimes, the incompressibility condition $\nabla \cdot \u = 0$ is considered as a part of the kinematic model. In the \emph{nonlinear dynamo model}, \eqref{e: Kinematic dynamo equations} is extended to the full ideal incompressible MHD (magnetohydrodynamic) equations by adding the Cauchy Momentum equation with Lorentz force and incompressibility,
\begin{equation}
\partial_t \u + (\u \cdot \nabla) \u = -\nabla p + (\nabla \times \B) \times \B, \qquad \nabla \cdot \u = 0. \label{e:Cauchy momentum equation}
\end{equation}
The kinematic dynamo equations \eqref{e: Kinematic dynamo equations} also allow one to study very general $\u$ regardless of constitutive laws, compressibility, viscosity, external forcing and so on~\cite{Eyink2}. Besides ideal or viscous MHD, one can e.g. set $\u$ to be the electron fluid velocity in Hall MHD~\cite{Eyink} or the velocity field in Moffatt's magnetic relaxation equations~\cite{BFV,Moffatt} or magneto-friction~\cite{Yeates}.

From a mathematical viewpoint, kinematic dynamo equations are also a useful toy model for MHD. While the highly difficult analysis of the back-propagation of the Lorentz force on the fluid is avoided, some of the salient features of the MHD model are still retained. In particular, one of the classical conserved quantities of MHD, magnetic helicity $\mathcal{H}_M(t) \defeq \int_{\Omega} \A(x,t) \cdot \B(x,t) \d x$, is conserved by weak solutions $\B,\u \in L^3_{t,x}$ of \eqref{e: Kinematic dynamo equations} e.g. under periodic boundary conditions on $\Omega = \T^3$ (see~\cite{FLSsubmitted,FLS22} for more information). Magnetic helicity constrains the dynamics, in particular via \emph{Arnold's inequality} $\int_{\Omega} \abs{\B(x,t)}^2 \d x \geq c\int_{\Omega} \A(x,t) \cdot \B(x,t) \d x$ which keeps magnetic energy bounded away from zero if the initial field $\B_0$ has non-zero magnetic helicity. By~\cite[Theorem 2.2]{FLS}, conservation even extends to a macroscopically averaged version, the so-called \emph{relaxation} of \eqref{e: Kinematic dynamo equations}. In the relaxation, solutions of \eqref{e: Kinematic dynamo equations} are replaced by weak limits of solutions, as we next make precise.

In the \textit{Tartar framework}~\cite{Tartar}, the kinematic dynamo equations are decoupled into the conservation laws
\begin{equation} \label{eq:relaxedMHD}
\nabla \cdot \B = 0, \qquad \partial_t \B + \nabla \times \E = 0
\end{equation}
(\textit{Gauss's law for magnetism} and \textit{Faraday's law of induction}) and the constitutive law $\E = \B \times \u$ (\textit{ideal Ohm's law}) which is codified into the condition that $(\B,\u,\E)$ takes values in the \emph{constraint set} $K = \{(B,u,E): E = B \times u\}$. One can also encode more information into the constraint set; one can e.g. prescribe kinetic and magnetic energy densities by letting $r,s > 0$ and considering the normalised set \[K_{r,s} = \{(B,u,E) \colon E = B \times u, \, \abs{u}=r, \, \abs{B} = s\}.\]
The \textit{relaxation of $K_{r,s}$} can be defined as the smallest set $\widetilde{K_{r,s}}$ such that whenever solutions of \eqref{eq:relaxedMHD} take values in $K_{r,s}$, their weak limits take values in $\widetilde{K_{r,s}}$. Another, potentially larger (but in our case equal) variant is the smallest superset of  $K_{r,s}$ that is closed under weak convergence for solutions of \eqref{eq:relaxedMHD}, essentially following Tartar~\cite{Tartar}. The relaxation describes macroscopic averages of solutions of \eqref{e: Kinematic dynamo equations}~\cite{Tartar}.

We discuss the motivation for studying the relaxation of \eqref{e: Kinematic dynamo equations}. The bare fields $\B$ and $\u$ can be neither observed~\cite{Aluie} nor predicted~\cite{Davidson} effectively, and so most turbulence theories concentrate on various averaged quantities which are much better reproducible~\cite{Davidson}. Lax~\cite{Lax} has suggested weak limits as a possible deterministic substitute for ensemble averaging, and the idea was developed further in~\cite{BGK}. For a recent systematic review on the topic see~\cite{DLS21}. In the relaxation, the averaging is performed at all length scales simultaneously, whereas e.g. coarse-graining of MHD via spatial filtering~\cite{Aluie} leads to subscale stresses which depend on the filtering resolution and need to be modelled (e.g. by eddy viscosity models~\cite[p. 401]{Davidson}).

The relaxation gives geometric insights which remain hidden in conventional averaging methods. As an example, in the kinematic dynamo equations, the orthogonality $\B \cdot \E = 0$ extends to the relaxation even though the constitutive law $\E = \B \times \u$ breaks down; this plays a key role in showing that $L^3_{t,x}$ is the optimal integrability under which magnetic helicity is conserved~\cite{FLSsubmitted}. It is, at any rate, a challenging problem to find suitable physically motivated selection criteria which pick a unique \textit{subsolution} (i.e. a triple $(\B,\u,\E)$ which solves \eqref{e: Kinematic dynamo equations} and take values in $\widetilde{K_{r,s}}$) under given initial/boundary (or other) conditions. For such a criterion in the case of the Muskat problem with flat interface ("maximal mixing") see~\cite{Mengual,Otto,Sze}.

The relaxation is also used in convex integration, a mathematical technique which produces a weak solution from a subsolution via an $h$-principle~\cite{DLS12}. The use of convex integration in fluid mechanics was pioneered by De Lellis and Sz\'ekelyhidi in~\cite{DLS}. By a refined quantitative version of the $h$-principle one can produce solutions whose averages are dictated by the subsolution (a rigorous statement was proved by Castro, Faraco and Mengual in~\cite{CFM}). Furthermore, instabilities in fluid dynamics, which result in a turbulent evolution intractable with the classical theory, have been successfully modelled by these techniques; see for example~\cite{CCF, CFM22} for Rayleigh-Taylor under IPM,~\cite{GK22, GKS} for Rayleigh-Taylor under inhomogeneous Euler,~\cite{GK, MS} for Kelvin-Helmoltz and the references contained therein.

Precise information on the relaxation is imporant in identifying the boundary/initial conditions under which convex integration can be run in the Tartar framework~\cite{Sze}. It is a natural meta-conjecture in fluid dynamics that the relaxation coincides with the \textit{$\Lambda$-convex hull} $K_{r,s}^\Lambda$, i.e., the set of points which cannot be separated from $K_{r,s}$ by a $\Lambda$-convex function; see \textsection \ref{s:Preliminaries} for the relevant definitions. We prove the conjecture in the case of the kinematic dynamo equations. Theorem \ref{Toinen päätulos} is also a step towards computing the exact relaxation of the full MHD system; a qualitative characterisation was given in~\cite[Theorem 6.7]{FLS}.

\begin{theorem}\label{Toinen päätulos}
The relaxation, first laminate and $\Lambda$-convex hull of the kinematic dynamo equations coincide and can be written as
\begin{align} \label{e: hull}
K_{r,s}^\Lambda
= & \{(B,u,E) \colon \abs{B} \leq r, \; \abs{u} \leq s, \nonumber  \\
& B \cdot E = 0, \; \abs{E-B \times u} \leq \sqrt{(r^2- \abs{B}^2) (s^2 - \abs{u}^2)}\}.
\end{align}
\end{theorem}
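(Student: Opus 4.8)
Throughout write $\mathcal{S}$ for the set on the right-hand side of~\eqref{e: hull} and $K_{r,s}^{(1)}$ for the first laminate, i.e.\ the set of barycentres $\lambda z_1 + (1-\lambda)z_2$ of two-point $\Lambda$-laminates of $K_{r,s}$ ($z_1,z_2 \in K_{r,s}$, $z_1 - z_2 \in \Lambda$, $\lambda \in [0,1]$). The plan is to prove the chain
\[
\mathcal{S} \;\subseteq\; K_{r,s}^{(1)} \;\subseteq\; \bigl(\text{relaxation of }K_{r,s}\bigr) \;\subseteq\; K_{r,s}^\Lambda \;\subseteq\; \mathcal{S},
\]
after which all four sets coincide with $\mathcal{S}$. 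The two middle inclusions are general facts of the Tartar framework recalled in \textsection\ref{s:Preliminaries}: a single $\Lambda$-cone splitting is realised by a plane-wave sequence of exact solutions of~\eqref{eq:relaxedMHD} converging weakly to the constant $z$, and weak limits of solutions cannot leave the set cut out by the $\Lambda$-convex functions that are nonpositive on $K_{r,s}$. A preliminary computation common to the two remaining inclusions is the wave cone of~\eqref{eq:relaxedMHD}: inserting $(B,u,E) = (\bar B,\bar u,\bar E)\,h(\xi\cdot x + \tau t)$ into $\nabla\cdot B = 0$ and $\partial_t B + \nabla\times E = 0$ forces $\xi\cdot\bar B = 0$ and $\tau\bar B + \xi\times\bar E = 0$, and letting $(\tau,\xi)\neq 0$ vary one gets $\Lambda = \{(B,u,E) : B\cdot E = 0\}$ (the $u$-slot is unconstrained).

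For the last inclusion $K_{r,s}^\Lambda\subseteq\mathcal{S}$ I would exhibit a $\Lambda$-convex function for each defining constraint of $\mathcal{S}$. The convex functions $|B|^2 - r^2$ and $|u|^2 - s^2$, both $\equiv 0$ on $K_{r,s}$, give $|B|\le r$ and $|u|\le s$ on the hull; the functions $\pm\,B\cdot E$ are $\Lambda$-affine, since the only quadratic term of $t\mapsto (B+tB')\cdot(E+tE')$ is $t^2(B'\cdot E')$, which vanishes on $\Lambda$, and they too are $\equiv 0$ on $K_{r,s}$, giving $B\cdot E = 0$. For the remaining inequality, introduce $g_\mu(B,u,E)\defeq 2\,|E - B\times u| + \mu^{-1}|B|^2 + \mu|u|^2$ for $\mu>0$. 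The point is that $g_\mu$ is \emph{convex}: along any line $t\mapsto (B+tB',u+tu',E+tE')$ the vector $P(t)\defeq (E+tE') - (B+tB')\times(u+tu')$ has constant second derivative $P'' \equiv -2\,B'\times u'$, so where $P(t)\neq 0$ one has $\tfrac{d^2}{dt^2}|P| \ge P\cdot P''/|P| \ge -|P''| = -2|B'\times u'|$, whereas $\tfrac{d^2}{dt^2}\bigl(\mu^{-1}|B+tB'|^2 + \mu|u+tu'|^2\bigr) = 2\mu^{-1}|B'|^2 + 2\mu|u'|^2 \ge 4|B'||u'| \ge 4|B'\times u'|$, so $g_\mu$ has nonnegative second derivative off the finite zero set of $P$, at which $|P|$ has at worst a convex corner. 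Since $g_\mu \equiv \mu^{-1}r^2 + \mu s^2$ on $K_{r,s}$, it stays $\le \mu^{-1}r^2 + \mu s^2$ on $K_{r,s}^\Lambda$; combined with $|B|\le r$, $|u|\le s$ this gives $2|E - B\times u| \le \mu^{-1}(r^2-|B|^2) + \mu(s^2-|u|^2)$ for every $\mu>0$, and minimising over $\mu$ yields $|E - B\times u|\le\sqrt{(r^2-|B|^2)(s^2-|u|^2)}$.

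For the first inclusion $\mathcal{S}\subseteq K_{r,s}^{(1)}$ I would, given $(B_\star,u_\star,E_\star)\in\mathcal{S}$, construct explicitly a two-point $\Lambda$-laminate of $K_{r,s}$ with this barycentre. If $|B_\star| = r$ or $|u_\star| = s$, or more generally if $\delta\defeq E_\star - B_\star\times u_\star = 0$, then~\eqref{e: hull} forces $E_\star = B_\star\times u_\star$ and one fixes the saturated variable (or $B_\star$, resp.\ $u_\star$) and splits the other along a diameter of its sphere, the difference lying in $\Lambda$ because one of $B'$, $E'$ vanishes. Otherwise set $B_1 = B_\star + (1-\lambda)w$, $B_2 = B_\star - \lambda w$, $u_1 = u_\star + (1-\lambda)v$, $u_2 = u_\star - \lambda v$; then $z_i\defeq (B_i,u_i,B_i\times u_i)\in K_{r,s}$, $z_1 - z_2\in\Lambda$ and $\lambda z_1 + (1-\lambda)z_2 = (B_\star,u_\star,E_\star)$ amount to $|w|^2 = (r^2-|B_\star|^2)/(\lambda(1-\lambda))$, $|v|^2 = (s^2-|u_\star|^2)/(\lambda(1-\lambda))$, prescribed values of $B_\star\cdot w$ and $u_\star\cdot v$, and $w\times v = \delta/(\lambda(1-\lambda))$ (the wave-cone condition $\det[B_1,B_2,u_1-u_2]=0$ turns out to be automatic, being equivalent to $B_\star\cdot E_\star = 0$). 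Using rotational symmetry to put $\delta$ along $e_3$ and $B_\star$ along $e_1$, this system collapses to a single planar identity of the shape $\sin(\beta-\alpha) = |\delta|\,\bigl/\sqrt{(r^2-|B_\star|^2)(s^2-|u_\star|^2)}$, in which $\lambda$ and two discrete sign choices are still free; one then solves it by an intermediate-value argument over the range of $\lambda$ for which the length and inner-product constraints are compatible. The symmetric guess $\lambda = \tfrac12$ is \emph{not} enough, which is precisely why $\lambda$ must be kept as a parameter. Degenerate configurations ($B_\star = 0$, or $u_\star$ parallel to $\delta$, and so on) either carry additional freedom or are recovered as limits of non-degenerate ones, using that $K_{r,s}$ is compact and $\Lambda$ closed, so $K_{r,s}^{(1)}$ is closed and therefore contains the closure of the dense non-degenerate part of $\mathcal{S}$.

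I expect the first inclusion to be the main obstacle: matching the eight parameters of a two-point $\Lambda$-laminate to the eight-dimensional $\mathcal{S}$ is tight, the cross-product constraint $w\times v = \delta/(\lambda(1-\lambda))$ must be met by genuinely exploiting the freedom in $\lambda$ and in the signs, and the degenerate configurations have to be organised carefully. The upper bound, by contrast, is soft once the convex functions $g_\mu$ have been found.
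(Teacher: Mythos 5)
Your overall architecture matches the paper's: the upper bound via separating functions, the lower bound via an explicit two-point $\Lambda$-splitting, and the relaxation via the Tartar framework. The upper bound is correct and is essentially the paper's argument in different packaging: after subtracting the constant $\mu^{-1}r^2+\mu s^2$ and setting $\mu=\sqrt{(1-\alpha)/\alpha}$, your family $g_\mu$ is exactly the family $H_\alpha$ whose maximum is $G_2$ in Lemma \ref{prop:useful Lambda-convex function}, and your distributional second-derivative proof of convexity, the $\Lambda$-affinity of $B\cdot E$, and the optimisation over $\mu$ are all fine; the relaxation inclusions also go through because your cut-out functions are convex or quadratic $\Lambda$-affine (your blanket phrase about ``the $\Lambda$-convex functions'' passing to weak limits is only valid for such functions, but those are the ones you use). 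Likewise your laminate system for $w,v,\lambda$ is precisely \eqref{e:Apuyht1}, including the observation that the wave-cone constraint reduces to $B_\star\cdot E_\star=0$.

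The genuine gap is in the inclusion $\mathcal{S}\subseteq K^{1,\Lambda}_{r,s}$: the decisive solvability step is only asserted, and the ``intermediate-value argument over the range of $\lambda$'' does not work as stated. Since $B_\star\cdot\delta=0$, both $w,v$ and $B_\star$ lie in the plane $\delta^\perp$; writing $\rho=\sqrt{r^2-\abs{B_\star}^2}$, $\sigma=\sqrt{s^2-\abs{u_\star}^2}$, $\kappa=(2\lambda-1)/(2\sqrt{\lambda(1-\lambda)})$, and $a,b$ for the in-plane angles of $w,v$ measured from $B_\star$ and from the projection $\Pi u_\star$, your system becomes $\cos a=\kappa\rho/\abs{B_\star}$, $\cos b=\kappa\sigma/\abs{\Pi u_\star}$, $\sin(\gamma+b-a)=\abs{\delta}/(\rho\sigma)$, where $\gamma$ is the angle between $B_\star$ and $\Pi u_\star$. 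On a fixed sign branch $a=\epsilon_1\arccos(\cdot)$, $b=\epsilon_2\arccos(\cdot)$, the endpoint values over the admissible $\kappa$-interval are $\sin(\gamma\pm\phi)$ with $\phi$ the arccosine of the ratio of the two bounds; taking e.g. $r=s=1$, $B_\star=u_\star=(1/2,0,0)$, $E_\star=(0,c,0)$ with $c$ near $3/4$ gives $\gamma=0$, $\phi=0$, so on the branches where the two arccosines cancel the left-hand side is identically small (indeed $\equiv 0$ here) while the target is near $1$: IVT over $\lambda$ on such a branch fails, and $\lambda=1/2$ fails too. The argument can be repaired by selecting the branch on which the two arccosine terms add and tracking the full angle swept (roughly $2\pi-2\phi$), but that is exactly the missing work; the paper avoids the issue by inverting the parametrisation, fixing $\lambda=1/2+B\cdot\bar B/\abs{\bar B}^2$ as a function of the direction of $\bar B$ and running Bolzano on $G(\alpha_{B,\bar B})$ with the antisymmetry $G(\pi/2)=-G(3\pi/2)$, which makes the bracketing immediate. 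A secondary slip: for $\delta=0$ with $\abs{B_\star}<r$ and $\abs{u_\star}<s$, fixing one of $B_\star,u_\star$ and splitting the other leaves the endpoints outside $K_{r,s}$; one must perturb $B$ and $u$ simultaneously in $\tp{span}\{B,u\}^\perp$ with $\bar B\times\bar u=0$ (Lemma \ref{l:Simple combinations}), although your closedness-of-the-first-laminate fallback would also cover this case once the non-degenerate case is actually proved.
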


To close the introduction, we mention that \eqref{e: hull} also gives the relaxation of kinematic dynamo equations in the stationary and incompressible cases (as the wave cone remains unchanged). However, in the incompressible, stationary case, the wave cone (and, \emph{a fortiori}, the relaxation) are strictly smaller. We compute the corresponding relaxation in \textsection \ref{s:The relaxation of the stationary model under incompressibility}.

\section{Preliminaries} \label{s:Preliminaries}
We recall some definitions from the theory of differential inclusions and refer to~\cite{Kirchheim} for more information. The \textit{wave cone} $\Lambda$ consists of the directions $(\bar{B},\bar{u},\bar{E}) \in \R^9$ such that plane waves $(\B,\u,\E)(\x,t) \defeq h((\x,t) \cdot \xi) (\bar{B},\bar{u},\bar{E})$, $h \in C^\infty(\R)$, solve \eqref{eq:relaxedMHD} for some $\xi \in (\R^3 \setminus \{0\}) \times \R$. The wave cone of the (non-stationary as well as stationary) kinematic dynamo equations has been computed in~\cite[Lemma 5.2]{FLS}:

\begin{proposition} \label{prop:wavecone}
$\Lambda = \{(B,u,E) \colon B \cdot E = 0\}$.
\end{proposition}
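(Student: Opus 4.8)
The plan is to substitute the plane-wave ansatz directly into the linear system \eqref{eq:relaxedMHD}, read off the algebraic conditions that a direction $(\bar B,\bar u,\bar E)$ and frequency $\xi=(\xi',\tau)\in(\R^3\setminus\{0\})\times\R$ must satisfy, and then verify by two elementary inclusions that the admissible directions are exactly those with $\bar B\cdot\bar E=0$. First I would set $s\defeq(\x,t)\cdot\xi=\x\cdot\xi'+t\tau$ and insert $\B=h(s)\bar B$, $\E=h(s)\bar E$; note that $\bar u$ never enters \eqref{eq:relaxedMHD} and is therefore automatically unconstrained, which accounts for the free $u$-slot in the statement. Using the chain rule $\nabla(h(s))=h'(s)\xi'$ and $\partial_t(h(s))=h'(s)\tau$, the equations $\nabla\cdot\B=0$ and $\partial_t\B+\nabla\times\E=0$ become $h'(s)(\xi'\cdot\bar B)=0$ and $h'(s)(\tau\bar B+\xi'\times\bar E)=0$. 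Choosing $h$ with $h'\not\equiv 0$, the direction lies in $\Lambda$ if and only if there exist $\xi'\in\R^3\setminus\{0\}$ and $\tau\in\R$ with
\[
\xi'\cdot\bar B=0\quad\text{and}\quad \tau\bar B+\xi'\times\bar E=0.
\]

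For the inclusion $\Lambda\subseteq\{B\cdot E=0\}$, I would dot the second relation with $\bar E$; since $\xi'\times\bar E\perp\bar E$, this gives $\tau(\bar B\cdot\bar E)=0$. If $\tau\neq 0$ we conclude $\bar B\cdot\bar E=0$ at once, while if $\tau=0$ the second relation forces $\xi'\times\bar E=0$, so $\bar E$ is parallel to $\xi'$ (or zero), and then $\bar B\cdot\bar E=0$ follows from $\xi'\cdot\bar B=0$. For the reverse inclusion I would exhibit an admissible frequency explicitly: if $\bar E\neq 0$, take $\xi'=\bar E$ and $\tau=0$, so that $\xi'\cdot\bar B=\bar E\cdot\bar B=0$ by hypothesis and $\xi'\times\bar E=\bar E\times\bar E=0$; if $\bar E=0$, pick any $\xi'\neq 0$ orthogonal to $\bar B$ (which exists in $\R^3$) together with $\tau=0$. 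This establishes both inclusions and hence the claimed identity.

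The computation is elementary, so there is no serious obstacle; the only point requiring care is the degenerate case $\tau=0$ in the forward inclusion, where one cannot divide by $\tau$ and must instead exploit the resulting collinearity of $\bar E$ and $\xi'$. It is also worth keeping in mind the asymmetry of the frequency constraint—only the spatial part $\xi'$ must be nonzero, whereas $\tau$ may vanish—which is precisely what legitimises the explicit choice $\xi'=\bar E$, $\tau=0$ in the reverse inclusion.
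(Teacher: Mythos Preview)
Your proof is correct. The paper does not actually supply its own argument for this proposition; it simply cites \cite[Lemma 5.2]{FLS}. Your direct verification---substituting the plane-wave ansatz into \eqref{eq:relaxedMHD}, reducing to the algebraic system $\xi'\cdot\bar B=0$, $\tau\bar B+\xi'\times\bar E=0$, and then handling the two inclusions with the case split on $\tau$ and the explicit frequency $\xi'=\bar E$---is self-contained and sound, and your observation that $\bar u$ is unconstrained because it does not appear in \eqref{eq:relaxedMHD} correctly explains why the wave cone imposes no condition on the $u$-component.
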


Given any compact set $C \subset \R^3$, the \emph{laminates} $C^{k,\Lambda}$, $k \in \N_0$, of $C$ are defined as follows:
\begin{align*}
& C^{0,\Lambda} \defeq C, \\
& C^{k+1,\Lambda} \defeq \{(\lambda z_1 + (1-\lambda) z_2 \colon z_1,z_2 \in C^{k,\Lambda}, \, z_1-z_2 \in \Lambda, \, \lambda \in [0,1]\}. 
\end{align*}
The \emph{lamination convex hull} of $C$ is defined as
\[C^{lc,\Lambda} \defeq \cup_{k=0}^\infty C^{k,\Lambda}.\]
Recall also that a function $G \colon \R^3 \to \R$ is said to be \emph{$\Lambda$-convex} if $t \mapsto G(z_0 + tz) \colon \R \to \R$ is convex for every $z_0 \in \R \times \R^3 \times \R^3$ and $z \in \Lambda$. The \emph{$\Lambda$-convex hull} $C^\Lambda$ consists of points $z \in \R^n$ that cannot be separated from $C$ by a $\Lambda$-convex function. More precisely, $z \notin C^\Lambda$ if and only if there exists a $\Lambda$-convex function $G$ such that $G|_C \le 0$ but $G(z) > 0$. We have $C^\Lambda \supset C^{lc,\Lambda}$.

\section{The proof of Theorem \ref{Toinen päätulos}}
We prove Theorem \ref{Toinen päätulos} through a series of lemmas, starting by the inclusion "$\subset$" in \eqref{e: hull}. For this we need the following $\Lambda$-affine and convex, and hence also $\Lambda$-convex, functions.

\begin{lemma} \label{cor:Lambda-affine functions}
The function
\[G_1(B,u,E) \defeq B \cdot E\]
is $\Lambda$-affine and vanishes in $K_{r,s}^\Lambda$.
\end{lemma}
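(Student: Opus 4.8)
The plan is to verify the two assertions separately and directly from the definitions. For $\Lambda$-affinity, I would use that $G_1$ is a quadratic (bilinear) form in the block variables $(B,E)$: for any base point $z_0 = (B_0,u_0,E_0)$ and any direction $z = (\bar B, \bar u, \bar E)$, expand
\[
G_1(z_0 + tz) = (B_0 + t\bar B)\cdot(E_0 + t\bar E) = B_0\cdot E_0 + t\,(B_0\cdot\bar E + \bar B\cdot E_0) + t^2\,\bar B\cdot\bar E.
\]
This is an affine function of $t$ precisely when the quadratic coefficient $\bar B\cdot\bar E$ vanishes, i.e. when $(\bar B,\bar u,\bar E)\in\Lambda$ by Proposition \ref{prop:wavecone}. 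Hence $t \mapsto G_1(z_0+tz)$ is affine along every wave-cone direction, which is the definition of $\Lambda$-affine. In particular both $G_1$ and $-G_1$ are $\Lambda$-convex.

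For the second assertion, I would first note that $G_1$ vanishes on $K_{r,s}$ itself: if $(B,u,E)\in K_{r,s}$ then $E = B\times u$, so $B\cdot E = B\cdot(B\times u) = 0$ by the standard vector identity. Thus on $K_{r,s}$ we have simultaneously $G_1 \le 0$ and $-G_1 \le 0$. Now apply the separation characterisation of the $\Lambda$-convex hull from \textsection\ref{s:Preliminaries}: if $z\in\R^9$ had $G_1(z)\neq 0$, then either $G_1$ or $-G_1$ is a $\Lambda$-convex function that is nonpositive on $K_{r,s}$ but strictly positive at $z$, witnessing $z\notin K_{r,s}^\Lambda$. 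Contrapositively, $G_1 \equiv 0$ on $K_{r,s}^\Lambda$ (and a fortiori on the smaller sets $K_{r,s}^{lc,\Lambda}$ and the first laminate).

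There is no substantial obstacle here; the only point requiring any care is matching the algebraic condition "quadratic term vanishes" with the explicit description $\Lambda = \{B\cdot E = 0\}$ of the wave cone, and recalling that a $\Lambda$-affine function yields two-sided separation (so that vanishing, not merely nonnegativity, propagates to the hull). Both are immediate from Proposition \ref{prop:wavecone} and the definition of $C^\Lambda$.
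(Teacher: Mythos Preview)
Your proof is correct and follows exactly the paper's own argument: $\Lambda$-affinity is read off from the expansion of $G_1(z_0+tz)$ together with Proposition~\ref{prop:wavecone}, and vanishing on $K_{r,s}^\Lambda$ is deduced from vanishing on $K_{r,s}$ via the $\Lambda$-affinity (applying both $G_1$ and $-G_1$ as separating $\Lambda$-convex functions). The paper states this in two sentences; you have simply spelled out the details.
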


\begin{proof}
The $\Lambda$-affinity follows directly from Proposition \ref{prop:wavecone}. Since $G_1$ vanishes in $K_{r,s}$, it also vanishes in $K_{r,s}^\Lambda$.
\end{proof}
In Theorem \ref{Toinen päätulos}, the $\Lambda$-affine function $G_1$ gives the correct bounds for the directions of the vector $E - B \times u$, and to bound the length we need the following convex function.
\begin{lemma} \label{prop:useful Lambda-convex function}
The function 
\[G_2(B,u,E) \defeq \max_{0 \leq \alpha \leq 1} [\alpha (\abs{B}^2 - r^2) + (1-\alpha) (\abs{u}^2 - s^2) + 2 \sqrt{\alpha (1-\alpha)} \abs{B \times u - E}]\]
is convex and vanishes in $K_{r,s}$. (Thus, $G_2 \leq 0$ in $K_{r,s}^\Lambda$.)
\end{lemma}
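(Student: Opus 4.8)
The plan is to verify the two claimed properties of $G_2$ separately. For convexity, I would note that $G_2$ is defined as a maximum (over the parameter $\alpha \in [0,1]$) of a family of functions of $(B,u,E)$, so it suffices to show that for each fixed $\alpha \in [0,1]$ the function
\[
g_\alpha(B,u,E) \defeq \alpha(\abs{B}^2 - r^2) + (1-\alpha)(\abs{u}^2 - s^2) + 2\sqrt{\alpha(1-\alpha)}\,\abs{B \times u - E}
\]
is convex, since a pointwise supremum of convex functions is convex. The first two terms are manifestly convex (indeed, $B \mapsto \alpha\abs{B}^2$ and $u \mapsto (1-\alpha)\abs{u}^2$ are convex quadratics and the constants are harmless). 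The only delicate term is $2\sqrt{\alpha(1-\alpha)}\,\abs{B \times u - E}$. Here I would observe that $(B,u) \mapsto B \times u$ is \emph{bilinear}, hence the map $(B,u,E) \mapsto B \times u - E$ is \emph{not} linear in $(B,u,E)$, so one cannot simply invoke "norm of a linear map is convex." This is the point where a genuine argument is needed, and I expect it to be the main obstacle.

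To handle the cross term, the natural trick is to write $\abs{B \times u - E}$ as a supremum of affine functions in a way compatible with the $\sqrt{\alpha(1-\alpha)}$ weight. Concretely, for any unit vector $\nu \in \R^3$ we have $\abs{B\times u - E} = \sup_{\abs{\nu}\le 1}\nu \cdot (B \times u - E)$, and by the scalar triple product identity $\nu \cdot (B \times u) = \det(\nu, B, u)$, which for fixed $\nu$ is bilinear in $(B,u)$. The key inequality is then
\[
2\sqrt{\alpha(1-\alpha)}\,\nu\cdot(B\times u) \le \alpha\,\abs{(\nu \cdot)\,\text{-part acting on } B}^2 + (1-\alpha)\,\abs{u}^2\text{-type bound},
\]
i.e. I would use $2\sqrt{\alpha(1-\alpha)}\,ab \le \alpha a^2 + (1-\alpha)b^2$ with $a,b$ chosen from the two factors of the triple product. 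More precisely, fixing $\nu$ and writing $\nu\cdot(B\times u) = (\nu\times B)\cdot u \le \abs{\nu \times B}\,\abs{u} \le \abs{B}\,\abs{u}$, one gets
\[
g_\alpha(B,u,E) \le \alpha\abs{B}^2 + (1-\alpha)\abs{u}^2 + 2\sqrt{\alpha(1-\alpha)}\abs{B}\abs{u} - \alpha r^2 - (1-\alpha)s^2 - 2\sqrt{\alpha(1-\alpha)}\,\nu\cdot E \quad(\text{wrong sign}),
\]
so instead the cleaner route is: for each fixed $\alpha$ and each fixed unit $\nu$, consider
\[
h_{\alpha,\nu}(B,u,E) \defeq \alpha\abs{B}^2 + (1-\alpha)\abs{u}^2 - 2\sqrt{\alpha(1-\alpha)}\,\bigl(\nu\cdot E - (\nu\times B)\cdot u\bigr) - \alpha r^2 - (1-\alpha)s^2,
\]
and show $g_\alpha = \sup_{\abs{\nu}\le 1} h_{\alpha,\nu}$. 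Each $h_{\alpha,\nu}$ is a quadratic form in $(B,u)$ plus a linear term in $E$; its Hessian in $(B,u)$ is $\begin{pmatrix}\alpha I & -\sqrt{\alpha(1-\alpha)}\,[\nu]_\times^T\\ -\sqrt{\alpha(1-\alpha)}\,[\nu]_\times & (1-\alpha)I\end{pmatrix}$ (with $[\nu]_\times$ the cross-product matrix), which is positive semidefinite by the Schur complement criterion since $\alpha(1-\alpha)[\nu]_\times[\nu]_\times^T \preceq \alpha(1-\alpha)I \preceq \alpha\cdot(1-\alpha)I$ using $\abs{\nu}\le 1$. Hence each $h_{\alpha,\nu}$ is convex, so $g_\alpha$ is convex, so $G_2$ is convex.

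Finally, for the vanishing on $K_{r,s}$: if $(B,u,E)\in K_{r,s}$ then $\abs{B}=r$, $\abs{u}=s$, and $E = B\times u$, so $\abs{B\times u - E}=0$ and each bracketed expression equals $\alpha(r^2-r^2)+(1-\alpha)(s^2-s^2)+0 = 0$; thus the maximum over $\alpha$ is $0$. The last parenthetical claim, $G_2\le 0$ on $K_{r,s}^\Lambda$, is immediate: $G_2$ is convex, hence $\Lambda$-convex, and vanishes (in particular is $\le 0$) on $K_{r,s}$, so by the definition of the $\Lambda$-convex hull no point of $K_{r,s}^\Lambda$ can have $G_2 > 0$.
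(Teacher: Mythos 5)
Your proof is correct, but your argument for the key step differs from the paper's. Both proofs reduce, as you do, to showing that for each fixed $\alpha$ the function $g_\alpha$ is convex (a maximum of convex functions being convex), and both ultimately rest on the same elementary inequality $2\sqrt{\alpha(1-\alpha)}\,\abs{B}\abs{u} \leq \alpha\abs{B}^2+(1-\alpha)\abs{u}^2$ together with $\abs{B\times u}\le\abs{B}\abs{u}$. The paper, however, proves convexity of $g_\alpha$ directly by a one-dimensional argument: restricting to an arbitrary line $t\mapsto z_0+tz$ and producing a supporting line at every point, via the estimate $g_\alpha(z_0+tz)-g_\alpha(z_0)\ge ct+\bigl(\alpha\abs{B}^2+(1-\alpha)\abs{u}^2-2\sqrt{\alpha(1-\alpha)}\abs{B\times u}\bigr)t^2\ge ct$. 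You instead dualize the Euclidean norm, writing $g_\alpha=\sup_{\abs{\nu}\le1}h_{\alpha,\nu}$ with $h_{\alpha,\nu}$ quadratic in $(B,u)$ and affine in $E$, and verify positive semidefiniteness of the Hessian by a Schur-complement computation using $[\nu]_\times[\nu]_\times^T\preceq I$. Your route is somewhat longer but gives the extra structural information that $G_2$ is a supremum of convex quadratics whose nonlinearity enters only through the bilinear quantity $B\times u$; the paper's supporting-line computation is shorter. Two cosmetic points: the aborted ``wrong sign'' detour should simply be deleted, and the Schur-complement step as stated needs the (trivial) separate remark for $\alpha\in\{0,1\}$, where the off-diagonal blocks vanish --- alternatively you can avoid Schur complements altogether by bounding the cross term via $2\sqrt{\alpha(1-\alpha)}\,\abs{(\nu\times B)\cdot u}\le\alpha\abs{B}^2+(1-\alpha)\abs{u}^2$. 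The vanishing on $K_{r,s}$ and the concluding remark about $K_{r,s}^{\Lambda}$ are handled exactly as in the paper.
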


\begin{proof}
Clearly $G_2|_{K_{r,s}} = 0$. Denoting $G_2 = \max_{0 \leq \alpha \leq 1} H_\alpha$, it then suffices to fix $\alpha \in [0,1]$ and show that $H_\alpha$ is convex. Fix $z_0 = (B_0,u_0,E_0)$ and $z = (B,u,E)$. Let $t \in \R$. Then
\begin{align*}
H_\alpha(z_0 + t z) - H_\alpha(z_0)
&\geq ct + (\alpha \abs{B}^2 + (1-\alpha) \abs{u}^2 - 2 \sqrt{\alpha (1-\alpha)} \abs{B \times u}) t^2 \geq c t
\end{align*}
for some constant $c \in \R$ which is independent of $t$, and so $H_\alpha$ is convex.
\end{proof}

The functions from Lemmas \ref{cor:Lambda-affine functions} and \ref{prop:useful Lambda-convex function} yield the inclusion "$\subset$" in \eqref{e: hull}:

\begin{lemma} \label{c: Upper bound on hull}
Every triple $(B,u,E) \in K_{r,s}^\Lambda$ satisfies
\begin{equation} \label{e:Upper bound for hull}
\abs{B} \leq r, \quad \abs{u} \leq s, \quad \abs{B \times u - E} \leq \sqrt{(r^2- \abs{B}^2) (s^2 - \abs{u}^2)}, \quad B \cdot E =0.
\end{equation}
\end{lemma}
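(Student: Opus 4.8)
The plan is to read off all four conditions in \eqref{e:Upper bound for hull} from the two separating functions already constructed in Lemmas \ref{cor:Lambda-affine functions} and \ref{prop:useful Lambda-convex function}, using the definition of the $\Lambda$-convex hull: if a function is $\Lambda$-convex and $\le 0$ on $K_{r,s}$, then it is $\le 0$ on $K_{r,s}^\Lambda$. The equality $B \cdot E = 0$ is immediate from Lemma \ref{cor:Lambda-affine functions}, since $G_1$ is $\Lambda$-affine, hence both $G_1$ and $-G_1$ are $\Lambda$-convex, and both vanish on $K_{r,s}$, forcing $G_1 = 0$ on $K_{r,s}^\Lambda$. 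So the real work is to extract the three remaining inequalities from the single inequality $G_2(B,u,E) \le 0$, i.e. from
\[
\alpha(\abs{B}^2 - r^2) + (1-\alpha)(\abs{u}^2 - s^2) + 2\sqrt{\alpha(1-\alpha)}\,\abs{B \times u - E} \le 0
\qquad \text{for all } \alpha \in [0,1].
\]

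First I would take $\alpha = 1$, which gives $\abs{B}^2 - r^2 \le 0$, i.e. $\abs{B} \le r$; and $\alpha = 0$, which gives $\abs{u} \le s$. For the third inequality, I would think of the left-hand side as a concave function of $\beta \defeq \sqrt{\alpha(1-\alpha)}$ — or more directly as a quadratic-type expression in the variable $\alpha$ — and optimise. Writing $a \defeq r^2 - \abs{B}^2 \ge 0$, $b \defeq s^2 - \abs{u}^2 \ge 0$, and $d \defeq \abs{B \times u - E} \ge 0$, the condition becomes $2\sqrt{\alpha(1-\alpha)}\,d \le \alpha a + (1-\alpha) b$ for all $\alpha \in [0,1]$. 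If $a = 0$ or $b = 0$ this forces $d = 0$ (take the corresponding $\alpha$), matching the claimed bound $d \le \sqrt{ab} = 0$. Otherwise, choosing $\alpha$ so that $\alpha a = (1-\alpha) b$, i.e. $\alpha = b/(a+b)$, makes the right-hand side equal to $2ab/(a+b)$ and $\sqrt{\alpha(1-\alpha)} = \sqrt{ab}/(a+b)$, so the inequality reads $2\sqrt{ab}\,d/(a+b) \le 2ab/(a+b)$, i.e. $d \le \sqrt{ab}$. This is exactly $\abs{B \times u - E} \le \sqrt{(r^2 - \abs{B}^2)(s^2 - \abs{u}^2)}$.

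The main obstacle, such as it is, is purely bookkeeping: one must make sure the chosen value $\alpha = b/(a+b)$ lies in $[0,1]$ (it does, since $a, b \ge 0$ and we may assume $a + b > 0$), and one must handle the degenerate cases $a + b = 0$ (then $\abs{B} = r$ and $\abs{u} = s$, and taking $\alpha = 1/2$ in the inequality gives $d \le 0$, consistent with $\sqrt{ab} = 0$) and $ab = 0$ separately but trivially, as noted above. No genuinely hard estimate is involved; the content was already packaged into the convexity of $G_2$ in Lemma \ref{prop:useful Lambda-convex function}, and here we merely exploit the freedom in $\alpha$. I would therefore write the proof as: invoke Lemmas \ref{cor:Lambda-affine functions} and \ref{prop:useful Lambda-convex function} to get $B \cdot E = 0$ and $G_2 \le 0$ on $K_{r,s}^\Lambda$; specialise to $\alpha \in \{0,1\}$ for the length bounds on $B$ and $u$; and specialise to $\alpha = (s^2 - \abs{u}^2)/((r^2 - \abs{B}^2) + (s^2 - \abs{u}^2))$ (or $\alpha = 1/2$ in the degenerate case) for the bound on $\abs{B \times u - E}$.
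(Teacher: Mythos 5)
Your proof is correct and takes essentially the same route as the paper: $B \cdot E = 0$ from the $\Lambda$-affine function $G_1$, the endpoint values $\alpha \in \{0,1\}$ of $G_2$ for $\abs{B} \le r$ and $\abs{u} \le s$, and the identical optimal choice $\alpha = (s^2-\abs{u}^2)/(r^2+s^2-\abs{B}^2-\abs{u}^2)$ for the bound on $\abs{B \times u - E}$. One small imprecision: when exactly one of $r^2-\abs{B}^2$, $s^2-\abs{u}^2$ vanishes, no single value of $\alpha$ yields $\abs{B \times u - E} = 0$ (your choice degenerates to $\alpha \in \{0,1\}$, where the inequality is vacuous); you must use the whole family of bounds and let $\alpha \to 1$ (resp.\ $\alpha \to 0$), which is precisely how the paper handles this case.
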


\begin{proof}
Suppose $(B,u,E) \in K_{r,s}^\Lambda$. Now $\abs{B}^2 - r^2 \leq G_2(B,u,E) \leq 0$ and $\abs{u}^2 - s^2 \leq G_2(B,u,E) \leq 0$. For the third inequality note that if $\abs{B} = r$, then Proposition \ref{prop:useful Lambda-convex function} gives, for every $\alpha \in (0,1)$, $\abs{B \times u - E} \leq [\alpha/(1-\alpha)]^{1/2} (r^2-\abs{b}^2)$, so that $\abs{B \times u - E} = 0$. The case $\abs{u} = s$ is similar, and so we assume that $\abs{B} < r$ and $\abs{u} < s$. Choose $\alpha = (s^2-\abs{u}^2)/(s^2+r^2-\abs{B}^2-\abs{u}^2)$ in Proposition \ref{prop:useful Lambda-convex function}. Now $G_\alpha(B,u,E) \leq 0$, that is, $\abs{B \times u - E} \leq [(r^2- \abs{B}^2) (s^2 - \abs{u}^2)]^{1/2}$. The equality $B \cdot E = 0$ follows immediately from Proposition \ref{cor:Lambda-affine functions}.
\end{proof}

Note that by the convexity of $G_2$ and Jensen's inequality, the three inequalities in \eqref{e:Upper bound for hull} also extend to various other averaging processes (whereas the imporant equality $B \cdot E = 0$, which is at the core of magnetic helicity conservation~\cite{FLS22}, breaks down).

Our next task is to prove the inclusion "$\supset$" in \eqref{e: hull}, i.e., the sharpness of the upper bound \eqref{e:Upper bound for hull}. We start with the case $E = B \times u$.

\begin{lemma} \label{l:Simple combinations}
Suppose $\abs{B} \leq r$ and $\abs{u} \leq s$. Then $(B,u,B \times u) \in K_{r,s}^{1,\Lambda}$.
\end{lemma}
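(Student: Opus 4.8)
The plan is to exhibit $(B,u,B\times u)$ directly as a single laminate, that is, as the midpoint of two points of $K_{r,s}$ whose difference lies in the wave cone $\Lambda$. Since $\mathrm{span}\{B,u\}$ is at most two-dimensional, I can pick a unit vector $e \in \R^3$ with $e \perp B$ and $e \perp u$. Put $c \defeq 2\sqrt{r^2-\abs{B}^2}\,e$ and $d \defeq 2\sqrt{s^2-\abs{u}^2}\,e$ (well defined by the hypotheses $\abs{B}\le r$ and $\abs{u}\le s$), and set
\[
z_\pm \defeq \bigl(B \pm \tfrac12 c,\; u \pm \tfrac12 d,\; (B \pm \tfrac12 c)\times(u \pm \tfrac12 d)\bigr).
\]
I would then claim that $z_+,z_- \in K_{r,s}$, that $\tfrac12 z_+ + \tfrac12 z_- = (B,u,B\times u)$, and that $z_+ - z_- \in \Lambda$; together these give $(B,u,B\times u) \in K_{r,s}^{1,\Lambda}$. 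The choice of the midpoint $\lambda = \tfrac12$ is what makes the perturbations symmetric, so that length preservation reduces to the clean orthogonality conditions $c\perp B$, $d\perp u$.

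Carrying this out: first, $z_\pm \in K_{r,s}$ because the $E$-slot is by construction the cross product of the $B$- and $u$-slots, while $\abs{B \pm \tfrac12 c}^2 = \abs{B}^2 + \tfrac14\abs{c}^2 = r^2$ (using $c \perp B$) and likewise $\abs{u \pm \tfrac12 d} = s$. Second, the $B$- and $u$-slots of $\tfrac12 z_+ + \tfrac12 z_-$ are visibly $B$ and $u$, and expanding the two cross products the $E$-slot equals $B\times u + \tfrac14\, c\times d$; this collapses to $B\times u$ precisely because $c$ and $d$ are both scalar multiples of $e$, so $c\times d = 0$. Third, a direct expansion gives $z_+ - z_- = (c,\; d,\; B\times d + c\times u)$, and by Proposition~\ref{prop:wavecone} it remains only to check $c \cdot (B\times d + c\times u) = 0$: the term $c\cdot(c\times u)$ vanishes as a triple product with a repeated factor, and $c\cdot(B\times d)$ equals a scalar multiple of $e\cdot(B\times e) = 0$ since $d \parallel e$.

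The degenerate cases are absorbed by the same formulas: if $\abs{B}=r$ then $c=0$ and the $B$-slots of $z_\pm$ coincide, if $\abs{u}=s$ then $d=0$, and if both equalities hold then $(B,u,B\times u)\in K_{r,s} = K_{r,s}^{0,\Lambda}\subset K_{r,s}^{1,\Lambda}$ already. I do not expect a genuine obstacle here; the one point that uses the three-dimensional structure essentially is that a single direction $e$ can be taken orthogonal to $B$ and $u$ simultaneously. This is exactly what forces $c\parallel d$ (hence $c\times d = 0$, preserving consistency of the averaged $E$-slot) and what places the splitting direction in the wave cone. In other words, one cannot in general perturb $B$ and $u$ along unrelated directions, but the common normal available in $\R^3$ is just enough.
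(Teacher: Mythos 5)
Your proof is correct and is essentially the paper's argument: you split $(B,u,B\times u)$ as the midpoint of two points of $K_{r,s}$ obtained by perturbing $B$ and $u$ along a common direction orthogonal to $\mathrm{span}\{B,u\}$ (your $c,d\parallel e$ are exactly the paper's $\bar B,\bar u\in\mathrm{span}\{B,u\}^\perp$ with $\bar B\times\bar u=0$), and you verify membership in $K_{r,s}$ and the wave cone condition in the same way.
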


\begin{proof}
Choose $\bar{B},\bar{u} \in \tp{span}\{B,u\}^\perp$ with $\bar{B} \times \bar{u} = 0$, $ \abs{\bar{B}}^2 = r^2-\abs{B}^2$ and $\abs{\bar{u}}^2 = s^2 - \abs{u}^2$. Then
\begin{align*}
(B,u,B \times u)
&= \frac{1}{2} (B+\bar{B},u+\bar{u},(B+\bar{B}) \times (u+\bar{u})) \\
&+ \frac{1}{2}  (B-\bar{B},u-\bar{u},(B-\bar{B}) \times (u-\bar{u})) \eqdef \frac{z_1+z_2}{2}
\end{align*}
and $z_1-z_2 = (\bar{B}, \bar{u}, B \times \bar{u} + \bar{B} \times u) \in \Lambda$.
\end{proof}

The case $\abs{B} < r$, $\abs{u} < s$, $E - B \times u \neq 0$ requires more work. In Lemma \ref{l: lemma for main theorem} we give a somewhat explicit characterisation of the first laminate $K^{1,\Lambda}_{r,s}$; the proof of formula \eqref{e: hull} is then completed by showing that whenever $E \perp B$ is as indicated in Theorem \ref{Toinen päätulos}, we can write $E = B \times u + \bar{B} \times \bar{u}/[\abs{\bar{B}} \abs{\bar{u}}]$ for some $\bar{B},\bar{u}$ satisfying \eqref{e: condition on bar u and bar b}--\eqref{e: second condition on bar u and bar b}.

\begin{lemma} \label{l: lemma for main theorem}
Suppose $\abs{B} < r$ and $\abs{u} < s$. Then $(B,u,E) \in K^{1,\Lambda}_{r,s}$ if and only if there exist $\bar{B},\bar{u} \neq 0$ such that
\begin{align}
& E = B \times u + \sqrt{(r^2-\abs{B}^2)(s^2-\abs{u}^2)} \frac{\bar{B} \times \bar{u}}{\abs{\bar{B}} \abs{\bar{u}}} \label{e: Form of e}, \\
& \abs{\bar{B}}^2 = \frac{r^2-\abs{B}^2}{s^2-\abs{u}^2} \abs{\bar{u}}^2 = 4(r^2-\abs{B}^2 \sin^2 \alpha_{B,\bar{B}}), \label{e: condition on bar u and bar b}\\
& \abs{B} \cos \alpha_{B,\bar{B}} = \sqrt{\frac{r^2-\abs{B}^2}{s^2-\abs{u}^2}} \abs{u} \cos \alpha_{u,\bar{u}}, \label{e: third condition on bar u and bar b}\\
& B \cdot \bar{B} \times \bar{u} = 0, \label{e: second condition on bar u and bar b}
\end{align}
where we set $\abs{B} \sin \alpha_{B,\bar{B}} = \abs{B} \cos \alpha_{B,\bar{B}} = 0$ if $B = 0$ and similarly for $u$.
\end{lemma}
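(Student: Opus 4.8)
The plan is to make the single rank-one lamination underlying $K^{1,\Lambda}_{r,s}$ completely explicit and then read off the stated conditions. By definition every point of $K^{1,\Lambda}_{r,s}$ equals $\lambda z_1+(1-\lambda)z_2$ with $z_i=(B_i,u_i,B_i\times u_i)$, $\abs{B_i}=r$, $\abs{u_i}=s$, $z_1-z_2\in\Lambda$ and $\lambda\in[0,1]$; moreover $\abs{B}<r$ and $\abs{u}<s$ force $\lambda\in(0,1)$, $B_1\neq B_2$ and $u_1\neq u_2$. I would parametrise $B_1=B+(1-\lambda)\bar{B}$, $B_2=B-\lambda\bar{B}$ and likewise $u_1=u+(1-\lambda)\bar{u}$, $u_2=u-\lambda\bar{u}$, so that $\bar{B}=B_1-B_2\neq0$ and $\bar{u}=u_1-u_2\neq0$, and I claim that this very pair $(\bar B,\bar u)$ works in the statement (no rescaling is needed, which is what accounts for the factor $4$).

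The first step is bilinear bookkeeping. Expanding the cross products gives $\lambda z_1+(1-\lambda)z_2=(B,\,u,\,B\times u+\lambda(1-\lambda)\,\bar{B}\times\bar{u})$, and the length constraints $\abs{B_i}=r$ amount to
\[
B\cdot\bar{B}=\tfrac{2\lambda-1}{2}\abs{\bar{B}}^2,\qquad \lambda(1-\lambda)\abs{\bar{B}}^2=r^2-\abs{B}^2,
\]
together with the analogous pair for $u$ with $s$ in place of $r$. For the wave cone I would use Proposition~\ref{prop:wavecone}: $z_1-z_2\in\Lambda$ says $\bar{B}\cdot(B_1\times u_1-B_2\times u_2)=0$, and since $\bar{B}\cdot(\bar{B}\times v)=0$ for every $v$ this collapses to $\bar{B}\cdot(B\times\bar{u})=0$, i.e. \eqref{e: second condition on bar u and bar b}. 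Multiplying the two energy identities and taking square roots gives $\lambda(1-\lambda)\abs{\bar{B}}\abs{\bar{u}}=\sqrt{(r^2-\abs{B}^2)(s^2-\abs{u}^2)}$, so the formula for $E$ above becomes exactly \eqref{e: Form of e}.

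The remaining task is to eliminate $\lambda$. Put $q=2\lambda-1\in(-1,1)$, so $\lambda(1-\lambda)=(1-q^2)/4$. The two relations $B\cdot\bar{B}=\tfrac{q}{2}\abs{\bar{B}}^2$ and $u\cdot\bar{u}=\tfrac{q}{2}\abs{\bar{u}}^2$ read $q\abs{\bar{B}}=2\abs{B}\cos\alpha_{B,\bar{B}}$ and $q\abs{\bar{u}}=2\abs{u}\cos\alpha_{u,\bar{u}}$ with the \emph{same} $q$; dividing them and inserting $\abs{\bar{B}}^2/\abs{\bar{u}}^2=(r^2-\abs{B}^2)/(s^2-\abs{u}^2)$ yields \eqref{e: third condition on bar u and bar b}. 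Feeding $q^2\abs{\bar{B}}^2=4\abs{B}^2\cos^2\alpha_{B,\bar{B}}$ and $1-q^2=4\lambda(1-\lambda)=4(r^2-\abs{B}^2)/\abs{\bar{B}}^2$ into one another produces $\abs{\bar{B}}^2=4(r^2-\abs{B}^2\sin^2\alpha_{B,\bar{B}})$, which with $\abs{\bar{B}}^2/\abs{\bar{u}}^2=(r^2-\abs{B}^2)/(s^2-\abs{u}^2)$ is \eqref{e: condition on bar u and bar b}. This proves ``$\Rightarrow$''. For ``$\Leftarrow$'', given $\bar{B},\bar{u}\neq0$ satisfying \eqref{e: Form of e}--\eqref{e: second condition on bar u and bar b} I would set $q=2B\cdot\bar{B}/\abs{\bar{B}}^2$ (equal to $2u\cdot\bar{u}/\abs{\bar{u}}^2$ by \eqref{e: condition on bar u and bar b} and \eqref{e: third condition on bar u and bar b}), note $q^2=4\abs{B}^2\cos^2\alpha_{B,\bar{B}}/\abs{\bar{B}}^2<1$ precisely because $\abs{B}<r$ (via \eqref{e: condition on bar u and bar b}), take $\lambda=(1+q)/2\in(0,1)$ and $B_i,u_i$ as above, and run the same identities in reverse to verify $\abs{B_i}=r$, $\abs{u_i}=s$, $z_1-z_2\in\Lambda$ and $\lambda z_1+(1-\lambda)z_2=(B,u,E)$.

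I expect the main obstacle to be purely in the bookkeeping: checking that the $\lambda$-free relations \eqref{e: condition on bar u and bar b}--\eqref{e: third condition on bar u and bar b} (the factor $4$ and the $\sin/\cos$ of the angles) encode \emph{exactly} the existence of a single $\lambda\in(0,1)$ compatible with both the $B$- and the $u$-length constraints, and in pushing the degenerate cases $B=0$ and/or $u=0$ through the argument, where one must invoke the angle conventions of the statement and where $B\cdot\bar{B}=0$, $u\cdot\bar{u}=0$ hold for a trivial reason.
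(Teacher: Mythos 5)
Your proposal is correct and follows essentially the same route as the paper: you parametrise the $\Lambda$-segment by $\bar B=B_1-B_2$, $\bar u=u_1-u_2$, extract the same length and triple-product identities, and eliminate $\lambda$ (your $q=2\lambda-1$) to obtain \eqref{e: Form of e}--\eqref{e: second condition on bar u and bar b}, reversing the computation for the converse with $\lambda=1/2+B\cdot\bar B/\abs{\bar B}^2$. The remaining bookkeeping you flag (the degenerate cases $B=0$ or $u=0$ and the factor $4$) indeed goes through exactly as you expect.
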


\begin{proof}
A general $\Lambda$-convex combination of two elements of $K_{r,s}$, with $\abs{B}<r$ and $\abs{u} < s$, is of the form
\begin{align}\label{Konveksi yhdiste ei-stationaarinen}
&\lambda V_1 + \mu V_2 \\
&= \lambda(B + \mu \bar{B}, u + \mu \bar{u}, (B + \mu \bar{B}) \times (u + \mu \bar{u})) \nonumber \\
&+ \mu (B - \lambda \bar{B}, u - \lambda \bar{u}, (B - \lambda \bar{B}) \times (u - \lambda \bar{u})) \nonumber \\
&= (B, u, B \times u + \lambda \mu \bar{B} \times \bar{u}), \nonumber
\end{align}
where $0 \leq \lambda \leq 1$, $\lambda + \mu = 1$,
\begin{equation}\label{e:Apuyht1}
\abs{B + \mu \bar{B}} = r, \quad \abs{B - \lambda \bar{B}} = r, \quad \abs{u + \mu \bar{u}} = s, \quad \abs{u - \lambda \bar{u}} = s, \quad B \cdot \bar{B} \times \bar{u} = 0.
\end{equation}
(In particular, $\bar{B},\bar{u} \neq 0$.)

We intend to show that \eqref{e:Apuyht1} is equivalent to \eqref{e: condition on bar u and bar b}--\eqref{e: second condition on bar u and bar b}. We first assume that \eqref{e:Apuyht1} holds and aim to prove \eqref{e: condition on bar u and bar b}. Squaring and subtracting on both sides of the first two equations in \eqref{e:Apuyht1} and moving terms (and recalling that $\lambda^2-\mu^2 = \lambda-\mu$),
\begin{equation}\label{e:Apuyht2}
2 B \cdot \bar{B} = (\lambda-\mu) \abs{\bar{B}}^2 = (2\lambda-1) \abs{\bar{B}}^2. \qquad \Longrightarrow \qquad \lambda = \frac{1}{2} + \frac{B \cdot \bar{B}}{\abs{\bar{B}}^2}.
\end{equation}
We next rewrite
\begin{align}\label{e:Apuyht5}
r^2
&= \abs{B + \mu \bar{B}}^2 = \abs{B}^2 - (2 B \cdot \bar{B} - \abs{\bar{B}}^2) \frac{B \cdot \bar{B}}{\abs{\bar{B}}^2} + \frac{(2 B \cdot \bar{B} - \abs{\bar{B}}^2)^2}{4 \abs{\bar{B}}^2} \nonumber \\
&= \abs{B}^2 + \frac{\abs{\bar{B}}^4 - 4 (B \cdot \bar{B})^2}{4 \abs{\bar{B}}^2}
\end{align}
and similarly $\lambda = 1/2 + u \cdot \bar{u}/\abs{\bar{u}}^2$ and $s^2 = \abs{u}^2 + [\abs{\bar{u}}^4-4(u \cdot \bar{u})^2]/[4\abs{\bar{u}}^2]$. We conclude that
\begin{equation} \label{e: conclusion on bar b and bar u}
\frac{r^2-\abs{B}^2}{\abs{\bar{B}}^2} = \frac{\abs{\bar{B}}^4 - 4 (B \cdot \bar{B})^2}{4 \abs{\bar{B}}^4} = \lambda \mu = \frac{\abs{\bar{u}}^4 - 4 (u \cdot \bar{u})^2}{4 \abs{\bar{u}}^4} = \frac{s^2-\abs{u}^2}{\abs{\bar{u}}^2}.
\end{equation}
Now \eqref{e: conclusion on bar b and bar u} immediately implies the first equality in \eqref{e: condition on bar u and bar b}, and the second one follows from \eqref{e:Apuyht5}:
\[\abs{\bar{B}}^2 = 4 \left( r^2-\abs{B}^2+ \frac{(B \cdot \bar{B})^2}{\abs{\bar{B}}^2} \right) = 4 (r^2 - \abs{B}^2 \sin^2 \alpha_{B,\bar{B}}).\]
Finally, \eqref{e: third condition on bar u and bar b} follows from \eqref{e:Apuyht2}, the equality $\lambda = 1/2 + u \cdot \bar{u}/\abs{\bar{u}}^2$ and \eqref{e: conclusion on bar b and bar u}:
\[\abs{B} \cos \alpha_{B,\bar{B}} = \frac{B \cdot \bar{B}}{\abs{\bar{B}}} = 
\frac{\abs{\bar{B}}}{\abs{\bar{u}}} \frac{u \cdot \bar{u}}{\abs{\bar{u}}} = \sqrt{\frac{r^2-\abs{B}^2}{s^2-\abs{u}^2}} \abs{u} \cos \alpha_{u,\bar{u}}.\]

Conversely, if \eqref{e: condition on bar u and bar b}--\eqref{e: second condition on bar u and bar b} hold, we intend to show that \eqref{e:Apuyht1} holds with the choice $\lambda = 1/2 + B \cdot \bar{B}/\abs{\bar{B}}^2$. First note that $\lambda \in [0,1]$ since \eqref{e: condition on bar u and bar b} yields
\[\frac{\abs{B \cdot \bar{B}}^2}{\abs{\bar{B}}^2} = \frac{\abs{B}^2 - \abs{B}^2 \sin^2 \alpha_{B,\bar{B}}}{\abs{\bar{B}}^2} \leq \frac{r^2 - \abs{B}^2 \sin^2 \alpha_{B,\bar{B}}}{\abs{\bar{B}}^2} = \frac{1}{4}.\]
Then \eqref{e:Apuyht5} holds, and similarly $\abs{B-\lambda \bar{B}} = r$ and $\abs{u + \mu \bar{u}} = \abs{u - \lambda \bar{u}} = s$. The identity \eqref{e: Form of e} is obtained by noting that \eqref{e: conclusion on bar b and bar u} yields
\[\lambda \mu = \frac{\sqrt{(r^2-\abs{B}^2)(s^2-\abs{u}^2)}}{\abs{\bar{B}} \abs{\bar{u}}}.\]
\end{proof}

Recall that our aim is to prove \eqref{e: hull}. When $\abs{B} \leq r$, $\abs{u} \leq s$ and $\bar{E} \in \bar{B}(0,1) \setminus \{0\}$ with $B \cdot \bar{E} = 0$, our aim is, therefore, to find $\bar{B}$ and $\bar{u}$ satisfying $\bar{B} \times \bar{u}/(\abs{\bar{B}} \abs{\bar{u}}) = \bar{E}$ along with \eqref{e: condition on bar u and bar b}--\eqref{e: second condition on bar u and bar b}.

\begin{proof}[Proof of \eqref{e: hull}]
Suppose first $B \neq 0$. Now $\bar{B},\bar{u} \neq 0$ satisfy $\bar{B} \times \bar{u}/[|\bar{B}| |\bar{u}|] = \bar{E}$ if and only if 
\begin{equation} \label{e:Geometric conditions}
\bar{B},\bar{u} \in \tp{span}\{B,B \times \bar{E}\}, \qquad \sin \alpha_{\bar{B},\bar{u}} = |\bar{E}|.
\end{equation}
We therefore consider pairs $(\bar{B},\bar{u})$ which satisfy \eqref{e:Geometric conditions}. Given a direction $\bar{B}/\abs{\bar{B}}$, note that $\bar{u}/\abs{\bar{u}}$ is uniquely fixed by \eqref{e:Geometric conditions} once we choose $\alpha_{\bar{B},\bar{u}} \in [0,2\pi)$ to be minimal. Also note that $\{B,\bar{E},B \times \bar{E}\}$ is an orthogonal basis of $\R^3$.

Our task is to show that \eqref{e: third condition on bar u and bar b} can be satisfied simultaneously with $\sin \alpha_{\bar{B},\bar{u}} = \abs{\bar{E}}$; then \eqref{e: condition on bar u and bar b} is obtained simply by scaling $\bar{B}$ and $\bar{u}$ and \eqref{e: second condition on bar u and bar b} holds automatically since $\bar{B},\bar{u} \in \tp{span}\{B,B \times \bar{E}\}$.

Aiming to solve \eqref{e: third condition on bar u and bar b}, we define $G \colon [0,2\pi) \to \R$ by
\[G(\alpha_{B,\bar{B}}) \defeq \abs{B} \cos \alpha_{B,\bar{B}} - \sqrt{\frac{r^2-\abs{B}^2}{s^2-\abs{u}^2}} \abs{u} \cos \alpha_{u,\bar{u}}.\]
Denoting $\alpha_{B,\bar{B}_1} = \pi/2$ and $\alpha_{B,\bar{B}_2} = 3\pi/2$, we have $\cos \alpha_{B,\bar{B}_1} = \cos \alpha_{B,\bar{B}_2} = 0$ and $\bar{u}_2/|\bar{u}_2| = - \bar{u}_1/|\bar{u}_1|$. Therefore, $G(\pi/2) = - G(3\pi/2)$. By Bolzano's theorem, $G(\alpha_{B,\bar{B}}) = 0$ for some $\alpha_{B,\bar{B}} \in [\pi/2,3\pi/2]$, that is, \eqref{e: third condition on bar u and bar b} holds.

We finish the proof by covering the case $B = 0$. We choose $\bar{u} \in \{u,\bar{E}\}^\perp \neq \{0\}$ and then select $\bar{B} \neq 0$ such that $\bar{B} \times \bar{u}/(|\bar{B}| \abs{\bar{u}}) = \bar{E}$. Again, \eqref{e: third condition on bar u and bar b} is satisfied, \eqref{e: condition on bar u and bar b} follows by scaling and \eqref{e: second condition on bar u and bar b} is immediate.
\end{proof}

\begin{proof}[Proof of Theorem \ref{Toinen päätulos}]
We have proved \eqref{e: hull} above, and the proof shows that \eqref{e: hull} also holds when $K_{r,s}^\Lambda$ is replaced by $K_{r,s}^{1,\Lambda}$. The proof of Theorem \ref{Toinen päätulos} is completed once we show that the set described in \eqref{e: hull} coincides with the relaxation $\widetilde{K_{r,s}}$.

For the proof of the fact that $\widetilde{K_{r,s}} \supset K_{r,s}^{1,\Lambda}$ see~\cite[pp. 162--163]{Tartar}. On the other hand, the inclusion $\widetilde{K_{r,s}} \subset K_{r,s}^\Lambda$ holds since $K_{r,s}^\Lambda = G_1^{-1}\{0\} \cap G_2^{-1}(-\infty,0]$ and the quadratic $\Lambda$-affine function $G_1$ and the convex function $G_2$ are lower semicontinuous on sequences of solutions of \eqref{eq:relaxedMHD} (see~\cite[Corollary 13]{Tartar}).
\end{proof}

\section{The relaxation of the stationary model under incompressibility} \label{s:The relaxation of the stationary model under incompressibility}
The $\Lambda$-convex hull $K_{r,s}^\Lambda$ remains unchanged if we incorporate the incompressibility condition $\nabla \cdot \u = 0$ or consider stationary solutions; in each case, the wave cone is given by Proposition \ref{prop:wavecone}. For completeness, we also compute the $\Lambda$-convex hull for the stationary kinematic dynamo equations under incompressibility. In this case, the wave cone, which we denote by $\Lambda_s$, is smaller.

\begin{proposition} \label{prop:wavecone s}
$\Lambda_s = \{(B,u,E) \colon B \cdot E = u \cdot E = 0\}$.
\end{proposition}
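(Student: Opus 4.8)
The plan is to compute the wave cone $\Lambda_s$ for the stationary, incompressible kinematic dynamo equations directly from its definition, following the same strategy used for Proposition \ref{prop:wavecone} in \cite[Lemma 5.2]{FLS}. In the stationary case, the relaxed system \eqref{eq:relaxedMHD} together with $\nabla \cdot \u = 0$ reads $\nabla \cdot B = 0$, $\nabla \times E = 0$ and $\nabla \cdot u = 0$. A plane wave $(\B,\u,\E)(\x) = h(\x \cdot \eta)(\bar B, \bar u, \bar E)$ with $\eta \in \R^3 \setminus \{0\}$ and $h \in C^\infty(\R)$ non-affine must satisfy all three equations, which forces $\eta \cdot \bar B = 0$, $\eta \cdot \bar u = 0$ and $\eta \times \bar E = 0$ (the last because $\curl$ of such a plane wave vanishes iff $\eta \times \bar E = 0$). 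So $(\bar B, \bar u, \bar E) \in \Lambda_s$ iff there exists $\eta \neq 0$ with $\eta \perp \bar B$, $\eta \perp \bar u$ and $\bar E \parallel \eta$.

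First I would establish the inclusion $\Lambda_s \subseteq \{(B,u,E) : B \cdot E = u \cdot E = 0\}$: if $(\bar B, \bar u, \bar E)$ is a wave-cone direction with corresponding $\eta$, then $\bar E = c\eta$ for some scalar $c$, and $\eta \perp \bar B$, $\eta \perp \bar u$ immediately give $\bar B \cdot \bar E = c\, \bar B \cdot \eta = 0$ and likewise $\bar u \cdot \bar E = 0$. For the reverse inclusion, suppose $B \cdot E = u \cdot E = 0$. If $E = 0$, then I need $\eta \neq 0$ with $\eta \perp B$ and $\eta \perp u$ and $\eta \times E = 0$ (automatic); such an $\eta$ exists because $\{B,u\}^\perp \neq \{0\}$ in $\R^3$. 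If $E \neq 0$, the natural candidate is $\eta = E$ itself: then $\eta \times E = 0$ trivially, and $\eta \perp B$, $\eta \perp u$ are exactly the hypotheses $B \cdot E = u \cdot E = 0$. This closes the characterisation.

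The one point requiring a little care — and the only place I expect any friction — is the claim that a non-affine plane wave $h(\x \cdot \eta)(\bar B, \bar u, \bar E)$ satisfies $\curl(h(\x \cdot \eta)\bar E) = 0$ if and only if $\eta \times \bar E = 0$: one computes $\curl(h(\x \cdot \eta)\bar E) = h'(\x \cdot \eta)\, \eta \times \bar E$, and since $h' \not\equiv 0$ this vanishes identically iff $\eta \times \bar E = 0$. Similarly $\div(h(\x \cdot \eta)\bar B) = h'(\x \cdot \eta)\, \eta \cdot \bar B$ and $\div(h(\x \cdot \eta)\bar u) = h'(\x \cdot \eta)\, \eta \cdot \bar u$. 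After recording these elementary identities, the two inclusions above combine to give $\Lambda_s = \{(B,u,E) \colon B \cdot E = u \cdot E = 0\}$, and the proof is complete. I would keep the write-up short, since this is essentially the same computation as \cite[Lemma 5.2]{FLS} with the extra constraint $\nabla \cdot \u = 0$ producing the additional orthogonality $\eta \perp \bar u$, hence the extra condition $u \cdot E = 0$.
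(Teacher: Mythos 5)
Your proposal is correct and follows essentially the same route as the paper: reduce membership in $\Lambda_s$ to the existence of $\xi \neq 0$ with $\xi \perp B$, $\xi \perp u$ and $E \parallel \xi$, then check this is equivalent to $B \cdot E = u \cdot E = 0$. Your case split in the reverse inclusion (on $E = 0$ versus $E \neq 0$, taking $\xi = E$ in the latter case) is marginally leaner than the paper's split on $B \times u$, but the argument is the same elementary one.
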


\begin{proof}
First suppose $B \cdot E = u \cdot E = 0$. Our aim is to find $\xi \in \R^3 \setminus \{0\}$ such that
\begin{equation} \label{eq:waveconeconditions}
B \cdot \xi = u \cdot \xi = 0, \qquad E \times \xi = 0.
\end{equation}
First, if $B \times u \neq 0$, then $\{B,u,B\times u\}$ is a basis of $\R^3$. Thus $B \cdot E = u \cdot E = 0$ implies $E = k \, B \times u$ for some $k \in \R$, so that we can choose $\xi = B \times u$. If $B \times u = 0$ and $E \neq 0$, we set $\xi = E$. If $B \times u = E = 0$, we choose any $\xi \in \{B,u\}^\perp \setminus \{0\}$.

Suppose then $(B,u,E) \in \Lambda$ so that \eqref{eq:waveconeconditions} holds for some $\xi \neq 0$. Now $E \times \xi = 0$ yields $E = k \xi$ for some $k \in \R$. Thus $B \cdot E = u \cdot E = 0$ by \eqref{eq:waveconeconditions}.
\end{proof}

The corresponding relaxation is characterised as follows:
\begin{theorem}\label{t:Stationaarinen puristumaton verho}
The relaxation, first laminate and $\Lambda_s$-convex hull of the stationary kinematic dynamo equations under incompressibility coincide and can be written as
\begin{align} \label{e: hull b}
K_{r,s}^{\Lambda_s}
= & \{(B,u,E) \colon \abs{B} \leq r, \; \abs{u} \leq s, \nonumber \\
& B \cdot E = 0, \; u \cdot E = 0, \; \abs{E-B \times u} \leq \sqrt{(r^2- \abs{B}^2) (s^2 - \abs{u}^2)}\}.
\end{align}
\end{theorem}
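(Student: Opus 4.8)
The plan is to follow the proof of Theorem \ref{Toinen päätulos} almost verbatim, the only new ingredient being that $\Lambda_s$ imposes the additional linear constraint $u \cdot E = 0$. For the inclusion "$\subset$" in \eqref{e: hull b} I would first record that, by Proposition \ref{prop:wavecone s}, the function $G_0(B,u,E) \defeq u \cdot E$ is $\Lambda_s$-affine (same argument as in Lemma \ref{cor:Lambda-affine functions}), and that it vanishes on $K_{r,s}$ because $u \cdot (B \times u) = 0$; hence $G_0 \equiv 0$ on $K_{r,s}^{\Lambda_s}$. Since $\Lambda_s \subset \Lambda$, the function $G_1(B,u,E) = B \cdot E$ remains $\Lambda_s$-affine and $G_2$ of Lemma \ref{prop:useful Lambda-convex function} remains convex, hence $\Lambda_s$-convex, so the estimates of Lemma \ref{c: Upper bound on hull} carry over unchanged. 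Consequently $K_{r,s}^{\Lambda_s} \subset G_0^{-1}\{0\} \cap G_1^{-1}\{0\} \cap G_2^{-1}(-\infty,0]$, which is exactly the right-hand side of \eqref{e: hull b}.

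For the reverse inclusion the crucial claim is
\[
K_{r,s}^{1,\Lambda_s} = K_{r,s}^{1,\Lambda} \cap \{(B,u,E) \colon u \cdot E = 0\},
\]
whose right-hand side equals that of \eqref{e: hull b} by Theorem \ref{Toinen päätulos}. The inclusion "$\subset$" is immediate from $\Lambda_s \subset \Lambda$ and $G_0 \equiv 0$ on $K_{r,s}^{1,\Lambda_s}$. For "$\supset$", write a point of $K_{r,s}^{1,\Lambda}$ with $u \cdot E = 0$ as $\lambda V_1 + \mu V_2$ with $V_i = (B_i,u_i,B_i \times u_i) \in K_{r,s}$, $\lambda + \mu = 1$, $\lambda,\mu \in [0,1]$, $V_1 - V_2 \in \Lambda$; if $\lambda \in \{0,1\}$ the point lies in $K_{r,s} \subset K_{r,s}^{1,\Lambda_s}$, so assume $\lambda,\mu \in (0,1)$. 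Setting $\bar{B} = B_1 - B_2$, $\bar{u} = u_1 - u_2$, the computation in the proof of Lemma \ref{l: lemma for main theorem} gives $E - B \times u = \lambda \mu\, \bar{B} \times \bar{u}$ and shows that the $\bar{B}$- and $\bar{u}$-components of $V_1 - V_2$ are orthogonal to its $E$-component precisely when $B \cdot (\bar{B} \times \bar{u}) = 0$ and $u \cdot (\bar{B} \times \bar{u}) = 0$, respectively. The first equality holds because $V_1 - V_2 \in \Lambda$; the second holds because $u \cdot (\bar{B} \times \bar{u}) = (\lambda\mu)^{-1} u \cdot (E - B \times u) = 0$ by $u \cdot E = 0$ and $u \cdot (B \times u) = 0$. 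Therefore $V_1 - V_2 \in \Lambda_s$, i.e.\ the point lies in $K_{r,s}^{1,\Lambda_s}$, proving the claim.

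Combining the two steps, $K_{r,s}^{\Lambda_s}$ is contained in the right-hand side of \eqref{e: hull b}, which equals $K_{r,s}^{1,\Lambda_s}$, and $K_{r,s}^{1,\Lambda_s} \subset K_{r,s}^{lc,\Lambda_s} \subset K_{r,s}^{\Lambda_s}$; hence the first laminate, the lamination convex hull and the $\Lambda_s$-convex hull all coincide with \eqref{e: hull b}. It then remains to identify the relaxation $\widetilde{K_{r,s}}$: one has $\widetilde{K_{r,s}} \supset K_{r,s}^{1,\Lambda_s}$ by Tartar's argument \cite[pp. 162--163]{Tartar}, while $\widetilde{K_{r,s}} \subset K_{r,s}^{\Lambda_s}$ holds because the right-hand side of \eqref{e: hull b} equals $G_0^{-1}\{0\} \cap G_1^{-1}\{0\} \cap G_2^{-1}(-\infty,0]$ with $G_0, G_1$ quadratic $\Lambda_s$-affine and $G_2$ convex, all lower semicontinuous along sequences of solutions of the stationary incompressible equations \cite[Corollary 13]{Tartar}.

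The only step where I would expect any difficulty is the reverse inclusion, since a priori shrinking the wave cone could shrink the first laminate strictly below $K_{r,s}^{1,\Lambda} \cap \{u \cdot E = 0\}$. What makes this harmless is the identity $E - B \times u = \lambda\mu\, \bar{B} \times \bar{u}$ from Lemma \ref{l: lemma for main theorem}: it forces $\bar{B} \times \bar{u}$ to be a multiple of $E - B \times u$, so the extra wave-cone requirement $u \cdot (\bar{B} \times \bar{u}) = 0$ is automatically met once $u \cdot E = 0$, and no geometric construction beyond those already used for Theorem \ref{Toinen päätulos} is needed.
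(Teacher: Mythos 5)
Your proposal is correct, and while its overall skeleton (separating functions for ``$\subset$'', a first-laminate argument for ``$\supset$'', Tartar's two observations for the relaxation) matches the paper, the key step is handled by a genuinely different and slicker route. The paper redoes the construction of Theorem \ref{Toinen päätulos}: it records that Lemma \ref{l: lemma for main theorem} holds after adding $u \cdot \bar{B} \times \bar{u} = 0$ to \eqref{e: second condition on bar u and bar b} and \eqref{e:Apuyht1}, and then reruns the geometric intermediate-value argument of the proof of \eqref{e: hull}, the only change being that for $B \times u \neq 0$ the vectors $\bar{B},\bar{u}$ are taken in $\textup{span}\{B,u\}$ (condition \eqref{e:Geometric conditions b}), which makes both triple products vanish. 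You instead reduce everything to Theorem \ref{Toinen päätulos} via the identity $K_{r,s}^{1,\Lambda_s} = K_{r,s}^{1,\Lambda} \cap \{u \cdot E = 0\}$: since any convex combination of two points of $K_{r,s}$ satisfies $E - B \times u = \lambda\mu\, \bar{B} \times \bar{u}$, and since the difference vector $V_1 - V_2 = (\bar{B},\bar{u}, B \times \bar{u} + \bar{B} \times u + (\mu-\lambda)\bar{B} \times \bar{u})$ lies in $\Lambda_s$ exactly when $B \cdot (\bar{B} \times \bar{u}) = 0$ and $u \cdot (\bar{B} \times \bar{u}) = 0$ (your stated orthogonality computation checks out), the extra wave-cone constraint is automatic once $u \cdot E = 0$ and $\lambda\mu \neq 0$; the degenerate cases $\lambda \in \{0,1\}$ and $\bar{B} \times \bar{u} = 0$ are trivially fine. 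Your argument does rely on the fact, stated explicitly in the paper's proof of Theorem \ref{Toinen päätulos}, that \eqref{e: hull} holds with $K_{r,s}^{1,\Lambda}$ in place of $K_{r,s}^\Lambda$, so the reduction is legitimate. What each approach buys: the paper's version yields the explicit stationary-incompressible analogue of the first-laminate characterisation in Lemma \ref{l: lemma for main theorem} and stays self-contained, whereas yours avoids any new geometric construction and makes transparent \emph{why} shrinking the wave cone only adds the linear constraint $u \cdot E = 0$ to the hull. The ``$\subset$'' inclusion (via $G_0 = u \cdot E$, $G_1$, $G_2$) and the identification of the relaxation via \cite{Tartar} are handled exactly as in the paper.
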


The proof of Theorem \ref{t:Stationaarinen puristumaton verho} is almost identical to that of Theorem \ref{Toinen päätulos}. Below we briefly indicate the necessary changes. First, the extra condition $u \cdot E = 0$ of the wave cone leads to a new $\Lambda$-affine function:
\begin{proposition}
The function $G_3(B,u,E) \defeq u \cdot E$ is $\Lambda_s$-affine and vanishes in $K_{r,s}^{\Lambda_s}$.
\end{proposition}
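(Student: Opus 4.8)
The plan is to mirror the proof of Lemma \ref{cor:Lambda-affine functions}, which handled the analogous $\Lambda$-affine function $G_1 = B \cdot E$. There are two things to check: that $G_3$ is $\Lambda_s$-affine, and that it vanishes on $K_{r,s}^{\Lambda_s}$. The second will follow from the first together with the vanishing of $G_3$ on $K_{r,s}$, exactly as before.

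For the $\Lambda_s$-affinity, I would fix $z_0 = (B_0, u_0, E_0)$ and a direction $z = (\bar{B}, \bar{u}, \bar{E}) \in \Lambda_s$ and expand
\[
G_3(z_0 + tz) = (u_0 + t\bar{u}) \cdot (E_0 + t\bar{E}) = u_0 \cdot E_0 + t(u_0 \cdot \bar{E} + \bar{u} \cdot E_0) + t^2\, \bar{u} \cdot \bar{E}.
\]
The only obstruction to affinity in $t$ is the quadratic coefficient $\bar{u} \cdot \bar{E}$. But Proposition \ref{prop:wavecone s} says precisely that every $z \in \Lambda_s$ satisfies $\bar{B} \cdot \bar{E} = \bar{u} \cdot \bar{E} = 0$, so the $t^2$ term drops out and $t \mapsto G_3(z_0 + tz)$ is affine. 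This is exactly where the stationary, incompressible case differs from the general one: the extra defining condition $u \cdot E = 0$ of $\Lambda_s$ is what forces the quadratic term to vanish, whereas $G_3$ would fail to be $\Lambda$-affine for the larger wave cone $\Lambda$ of Proposition \ref{prop:wavecone}.

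To conclude that $G_3$ vanishes on the hull, I would first note that on $K_{r,s}$ we have $E = B \times u$, so $G_3(B,u,E) = u \cdot (B \times u) = 0$ by the scalar triple product. Since $G_3$ is $\Lambda_s$-affine, both $G_3$ and $-G_3$ are $\Lambda_s$-convex and are $\leq 0$ on $K_{r,s}$; by the definition of the $\Lambda_s$-convex hull (a point of the hull cannot be separated from $K_{r,s}$ by a $\Lambda_s$-convex function), neither $G_3$ nor $-G_3$ can take a positive value anywhere on $K_{r,s}^{\Lambda_s}$. Hence $G_3 \equiv 0$ on $K_{r,s}^{\Lambda_s}$.

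There is no real obstacle here; the argument is routine once Proposition \ref{prop:wavecone s} is in hand. The only two points that require a moment's care are reading off the quadratic coefficient $\bar{u} \cdot \bar{E}$ correctly, so that $\Lambda_s$-affinity reduces to this single scalar condition, and recognising the triple-product identity $u \cdot (B \times u) = 0$; both are immediate.
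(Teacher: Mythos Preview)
Your proof is correct and follows exactly the approach the paper intends: the proposition is stated without proof because it is the direct analogue of Lemma \ref{cor:Lambda-affine functions}, and you have simply written out that analogue in detail, using Proposition \ref{prop:wavecone s} to kill the quadratic term $\bar{u}\cdot\bar{E}$ and the triple-product identity $u\cdot(B\times u)=0$ to get vanishing on $K_{r,s}$.
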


Lemmas \ref{cor:Lambda-affine functions}--\ref{prop:useful Lambda-convex function} clearly hold when $\Lambda$ is replaced by $\Lambda_s$. The inclusion "$\subset$" in \eqref{e: hull b} is then proved just like Lemma \ref{c: Upper bound on hull}. It remains to cover the inclusion "$\supset$" in \eqref{e: hull b} when $\Lambda$ is replaced by $\Lambda_s$.

First, Lemma \ref{l:Simple combinations} and its proof extend to $\Lambda_s$ verbatim. In Lemma \ref{l: lemma for main theorem} and its proof, the only necessary change is adding the condition $u \cdot \bar{B} \times \bar{u} = 0$ to formulas \eqref{e: second condition on bar u and bar b} and \eqref{e:Apuyht1}. The proof of \eqref{e: hull b} requires a bit more modification, as we indicate below.

\begin{proof}[Proof of \eqref{e: hull b}]
When $\abs{B} \leq r$, $\abs{u} \leq s$ and $\bar{E} \in \bar{B}(0,1) \setminus \{0\}$ with $B \cdot \bar{E} = u \cdot \bar{E} = 0$, our aim is to find $\bar{B}, \bar{u} \neq 0$ such that $\bar{B} \times \bar{u}/[|\bar{B}| |\bar{u}|] = \bar{E}$, \eqref{e: condition on bar u and bar b}--\eqref{e: second condition on bar u and bar b} hold and $u \cdot \bar{B} \times \bar{u} = 0$.

If $B \times u = 0$, we can follow the proof of \eqref{e: hull} verbatim. Suppose, therefore, that $B \times u \neq 0$. Now $\bar{B}, \bar{u}$ satisfy $\bar{B} \times \bar{u}/[|\bar{B}| |\bar{u}|] = \bar{E}$ if and only if
\begin{equation} \label{e:Geometric conditions b}
\bar{B},\bar{u} \in \textup{span}\{B,u\}, \qquad \sin \alpha_{\bar{B},\bar{u}} = |\bar{E}|.
\end{equation}
Again, we consider pairs $(\bar{B},\bar{u})$ which satisfy \eqref{e:Geometric conditions b} and need to show that \eqref{e: third condition on bar u and bar b} can be satisfied simultaneously with $\sin \alpha_{\bar{B},\bar{u}} = |\bar{E}|$. Just like in the proof of \eqref{e: hull}, the function $G \colon [0,2\pi) \to \R$, $G(\alpha_{B,\bar{B}}) \defeq \abs{B} \cos \alpha_{B,\bar{B}} - \sqrt{(r^2-\abs{B}^2)/(s^2-\abs{u}^2)} \abs{u} \cos \alpha_{u,\bar{u}}$ has a zero, which proves the claim.
\end{proof}

\bigskip
\footnotesize
\noindent\textit{Acknowledgments.}
We thank Daniel Faraco for useful discussions.

\bibliography{Dynamo}
\bibliographystyle{amsplain}
\end{document}